\documentclass{article}
\usepackage{amssymb,latexsym}
\usepackage{amsmath}
\usepackage{graphics}
\usepackage{graphicx}
\newtheorem{theorem}{Theorem}

\newtheorem{corollary}{Corollary}

\newtheorem{definition}{Definition}

\newtheorem{lemma}{Lemma}

\newenvironment{proof}[1][Proof]{\textbf{#1.} }{\ \rule{0.5em}{0.5em}}
\numberwithin{equation}{section} \numberwithin{theorem}{section}
\numberwithin{corollary}{section} \numberwithin{lemma}{section}
\numberwithin{remark}{section} \numberwithin{definition}{section}

\begin{document}
\title{A Note on Inextensible Flows of Curves on Oriented Surface}
\author{Onder Gokmen Yildiz$^{a}$, Soley Ersoy $^{b}$, Melek Masal $^{c} $}

\date{}

\maketitle
\begin{center}
$^{a}$ Department of Mathematics, Faculty of Arts and Sciences \\
Bilecik University, Bilecik/TURKEY

$^{b}$ Department of Mathematics, Faculty of Arts and Sciences \\
Sakarya University, Sakarya/TURKEY

$^{c}$ Department of Mathematics Teaching , Faculty of Education \\
Sakarya University,  Sakarya/TURKEY

\end{center}

\begin{abstract}
In this paper, the general formulation for
inextensible flows of curves on oriented surface in $\mathbb{R}^3
$ is investigated. The necessary and sufficient conditions for inextensible curve
flow lying an oriented surface are expressed as a partial differential
equation involving the geodesic curvature and the geodesic
torsion. Moreover, some special cases of inextensible curves on
oriented surface are given.

\textbf{Mathematics Subject Classification (2010)}: 53C44, 53A04,
53A05, 53A35.

\textbf{Keywords}: Curvature flows, inextensible, oriented surface.\\
\end{abstract}

\section{Introduction}\label{S:intro}

It is well known that many nonlinear phenomena in physics,
chemistry and biology are described by dynamics of shapes, such as
curves and surfaces. The evolution of curve and surface has
significant applications in computer vision and image processing.
The time evolution of a curve or surface generated by its
corresponding flow in $\mathbb{R}^3 $ -for this reason we shall
also refer to curve and surface evolutions as flows throughout
this article- is said to be inextensible if, in the former case,
its arclength is preserved, and in the latter case, if its
intrinsic curvature is preserved. Physically, the inextensible curve
flows give rise to motions in which no strain energy is induced.
The swinging motion of a cord of fixed length, for example, or of
a piece of paper carried by the wind, can be described by
inextensible curve and surface flows. Such motions arise quite
naturally in a wide range of the physical applications. For example,
both Chirikjian and Burdick \cite{GC} and Mochiyama et al.
\cite{HM} study the shape control of hyper-redundant, or
snake-like robots.

\noindent The inextensible curve and surface flows also arise in the
context of many problems in computer vision \cite{MK}, \cite{HQ},
computer animation \cite{MD} and even structural mechanics
\cite{DJ}. There have been a lot of studies in the literature on
plane curve flows, particularly on evolving curves in the
direction of their curvature vector field (referred to by various
names such as "curve shortening", flow by curvature" and "heat
flow"). Particularly relevant to this paper are the methods
developed by Gage and Hamilton \cite{MGRS} and Grayson \cite{MG}
for studying the shrinking of closed plane curves to circle via
heat equation.\\
The distinction between heat flows and inextensible flows of planar curves were elaborated in detail, and some examples of the latter were given by \cite{DK1}. Also, a general formulation for inextensible flows of curves and developable surfaces in $\mathbb{R}^3 $ are exposed by \cite{DK2}.\\
In this paper, we develop the general formulation
for inextensible flows of curves  according to Darboux frame in
$\mathbb{R}^3 $. Necessary and sufficient conditions for an
inextensible curve flow are expressed as a partial differential
equation involving the geodesic curvature and geodesic torsion.

\section{Preliminaries}\label{S:intro}
Let $S$ be an oriented surface in three-dimensional Euclidean
space $E^3 $ and $\alpha \left( s \right)$ be a curve lying on the
surface $S$. Suppose that the curve $\alpha \left( s \right)$ is
spatial then there exists the Frenet frame $\left\{
{\overrightarrow T ,\overrightarrow N ,\overrightarrow B }
\right\}$ at each points of the curve where $\overrightarrow T $
is unit tangent vector, $\overrightarrow N $ is principal normal
vector and $\overrightarrow B $ is binormal vector, respectively.
The Frenet equation of the curve $\alpha \left( s \right)$ is
given by

\begin{equation*}
\begin{array}{l}
 \overrightarrow {T'}  = \kappa \overrightarrow N  \\
 \overrightarrow {N'}  =  - \kappa \overrightarrow T  + \tau \overrightarrow B  \\
 \overrightarrow {B'}  =  - \tau \overrightarrow N  \\
 \end{array}
\end{equation*}
where $\kappa $ and $\tau $ are curvature and torsion of the curve
$\alpha \left( s \right)$, respectively.

\noindent Since the curve $\alpha \left( s \right)$ lies on the
surface $S$ there exists another frame of the curve $\alpha \left(
s \right)$ which is called Darboux frame and denoted by $\left\{
{\overrightarrow T ,\overrightarrow g ,\overrightarrow n }
\right\}$. In this frame $\overrightarrow T $ is the unit tangent
of the curve, $\overrightarrow n $ is the unit normal of the
surface $S$ and $\overrightarrow g $ is a unit vector given by
$\overrightarrow g = \overrightarrow n  \times \overrightarrow T $. Since the unit tangent $\overrightarrow T $ is common element of
both Frenet frame and Darboux frame, the vectors $\overrightarrow
N ,\overrightarrow B ,\overrightarrow g $ and $\overrightarrow n $
lie on the same plane. So that the relations between these frames
can be given as follows

\begin{equation*}
\left[ \begin{array}{l}
 \overrightarrow T  \\
 \overrightarrow g  \\
 \overrightarrow n  \\
 \end{array} \right] = \left[ \begin{array}{l}
 1\,\,\,\,\,\,\,\,\,\,\,\,\,\,\,\,0\,\,\,\,\,\,\,\,\,\,\,\,\,\,\,\,\,\,0 \\
 0\,\,\,\,\,\,\,\,\,\,\,\,cos \varphi \,\,\,\,\,\,\,\,\,\sin \varphi  \\
 0\,\,\,\,\, - \sin \varphi \,\,\,\,\,\,\,\,\cos \varphi  \\
 \end{array} \right]\left[ \begin{array}{l}
 \overrightarrow T  \\
 \overrightarrow N  \\
 \overrightarrow B  \\
 \end{array} \right]
\end{equation*}
where $\varphi $ is the angle between the vectors $
\overrightarrow g $ and $\overrightarrow N $. The derivative
formulae of the Darboux frame is
\begin{equation*}
\left[ \begin{array}{l}
 \mathop {\overrightarrow T }\limits^.  \\
 \mathop {\overrightarrow g }\limits^.  \\
 \mathop {\overrightarrow n }\limits^.  \\
 \end{array} \right] = \left[ \begin{array}{l}
 \,\,\,\,\,\,\,\,0\,\,\,\,\,\,\,\,\,\,\,\,\,\,\,{k_g}\,\,\,\,\,\,\,\,\,\,\,\,{k_n} \\
  - \,\,{k_g}\,\,\,\,\,\,\,\,\,\,\,\,\,\,0\,\,\,\,\,\,\,\,\,\,\,\,\,\,{\tau _g} \\
 \,\,\,\,\,\,\,{k_n}\,\,\,\,\, - {\tau _g}\,\,\,\,\,\,\,\,\,\,\,\,\,\,0 \\
 \end{array} \right]\left[ \begin{array}{l}
 \overrightarrow T  \\
 \overrightarrow g  \\
 \overrightarrow n  \\
 \end{array} \right]
\end{equation*}
where $\ k_g ,\,k_n $ and $\tau _g $ are called the geodesic
curvature, the normal curvature and the geodesic torsions,
respectively. Here and in the following, we use "dot" to denote
the derivative with respect to the arc length parameter of a
curve.

\noindent The relations between the geodesic curvature, normal
curvature, geodesic torsion and $\kappa ,\,\tau $ are given as
follows, \cite{BO}

\begin{equation*}
\begin{array}{l}
k_g  = \kappa \cos \varphi \,,\,{\rm{   }}\,\,k_n  = \kappa \sin
\varphi \,,\,\,{\rm{  }}\,\,\tau _g  = \tau  + \frac{{d\varphi
}}{{ds}}.\,\,
 \end{array}
\end{equation*}
Furthermore, the geodesic curvature $k_g $ and geodesic torsion
$\tau _g $ of  curve $\alpha \left( s \right)$ can be calculated
as follows, \cite{BO}

\begin{equation*}
\begin{array}{l}
 k_g  = \left\langle {\frac{{d\overrightarrow \alpha  }}{{ds}},\frac{{d^2 \overrightarrow \alpha  }}{{ds^2 }} \times \overrightarrow n } \right\rangle  \\
 \tau _g  = \left\langle {\frac{{d\overrightarrow \alpha  }}{{ds}},\overrightarrow n  \times \frac{{d\overrightarrow n }}{{ds}}} \right\rangle . \\
 \end{array}
\end{equation*}
In the differential geometry of surfaces, for a curve $\alpha
\left( s \right)$ lying on a surface $S$ the following
relationships  are well-known, \cite{BO}

i-      $\alpha \left( s \right)$  is a geodesic curve if and only
if $k_g  = 0$,

ii-     $\alpha \left( s \right)$ is a asymptotic line if and only
if $k_n  = 0$,

iii-    $\alpha \left( s \right)$  is a principal line if and only
if $\tau _g  = 0$.

\noindent Through the every point of the surface a geodesic passes
in every direction. A geodesic is uniquely determined by an
initial point and tangent at that point. All straight lines on a
surface are geodesics.

\noindent Along all curved geodesics the principal normal
coincides with the surface normal. Along asymptotic lines
osculating planes and tangent planes coincide, along geodesics
they are normal. Through a point of a non-developable surface pass
two asymptotic lines which can be real or imaginary.

\section{Inextensible Flows of Curve Lying on Oriented Surface}\label{S:intro}

Throughout this paper, we suppose that
\begin{equation*}
\alpha \,\,:\left[ {0,l} \right] \times \left[ {\left. {0,w}
\right)} \right. \to M \subset E^3
\end{equation*}
is a one parameter family of differentiable curves on orientable
surface $M$ in $E^3 $, where $l$ is the arclength of the initial
curve. Let $u$ be the curve parameterization variable, $0 \le u
\le l.$ If the speed of curve $\alpha $ is denoted by $
v = \left\| {\frac{{\partial \overrightarrow \alpha  }}{{\partial u}}} \right\|
$ then the arclength of $\alpha
$ is

\begin{equation}\label{3.1}
S\left( u \right) = \int\limits_0^u {\left\| {\frac{{\partial \overrightarrow \alpha  }}{{\partial u}}} \right\|du}  = \int\limits_0^u {v\,du.}
\end{equation}
The operator $\frac{\partial }{{\partial s}}$ is given in terms of
$u$ by

\begin{equation}\label{3.2}
\begin{array}{l}
\frac{\partial }{{\partial s}} = \frac{1}{v}\frac{\partial
}{{\partial u}}.
\end{array}
\end{equation}
Thus, the arclength is $ds = v\,du.$

\begin{definition}
Let $M$ be an orientable surface and $\alpha $ be a differentiable
curve on $M$ in $E^3 $. Any flow of the curve $\alpha $ with
respect to Darboux frame $\left\{ {\overrightarrow T ,\overrightarrow g ,\overrightarrow n } \right\}$ can be
expressed following form:
\end{definition}

\begin{equation}\label{3.3}
\begin{array}{l}
\frac{{\partial \overrightarrow \alpha  }}{{\partial t}} = {f_1}\overrightarrow T  + {f_2}\overrightarrow g  + {f_3}\overrightarrow n .
 \end{array}
\end{equation}
Here, $f_1 ,\,f_2 $ and $f_3 $ are scalar speed of the curve
$\alpha .$ Let the arclength variation be

\begin{equation}\label{3.4}
\begin{array}{l}
S\left( {u,t} \right) = \int\limits_0^u {v\,du.}
 \end{array}
\end{equation}
In the Euclidean space the requirement that the curve not be
subject to any elongation or compression can be expressed by the
condition

\begin{equation}\label{3.5}
\begin{array}{l}
\frac{\partial }{{\partial t}}S\left( {u,t} \right) =
\int\limits_0^u {\frac{{\partial v}}{{\partial t}}du = 0\,\,,{\rm{
}}\,\,u \in \left[ {0,1} \right]} .
 \end{array}
\end{equation}

\begin{definition}
A curve evolution $\alpha \left( {u,t} \right)$ and its flow
$\frac{{\partial \overrightarrow \alpha  }}{{\partial t}}$
 on the oriented surface $M$
in $E^3 $ are said to be inextensible if \[\frac{\partial }{{\partial t}}\left\| {\frac{{\partial \overrightarrow \alpha  }}{{\partial u}}} \right\| = 0.\]

\end{definition}

\noindent Now, we research the necessary and sufficient condition
for inelastic curve flow. For this reason, we need to the
following Lemma.

\begin{lemma}
In ${E^3}$, let $M$ be an orientable surface and $\left\{ {\overrightarrow T ,\overrightarrow g ,\overrightarrow n } \right\}$ be a Darboux frame of $\alpha $ on $M.$ There exists
following relation between the scalar speed functions
${f_1},\,{f_2},\,{f_3}$ and the normal curvature ${k_n}$, geodesic
curvature ${k_g}$ of $\alpha $ the curve
\begin{equation}\label{3.6}
\begin{array}{l}
\frac{{\partial v}}{{\partial t}} = \frac{{\partial f_1
}}{{\partial u}} - f_2 vk_g  - f_3 vk_n .

 \end{array}
\end{equation}
\end{lemma}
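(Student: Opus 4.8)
The plan is to compute $\partial v/\partial t$ directly from the definition $v = \left\| \partial\overrightarrow\alpha/\partial u \right\|$ by differentiating $v^2 = \left\langle \partial\overrightarrow\alpha/\partial u, \partial\overrightarrow\alpha/\partial u \right\rangle$ with respect to $t$, and then to reorganize everything in terms of the Darboux frame. First I would write $2v\,\partial v/\partial t = 2\left\langle \partial^2\overrightarrow\alpha/\partial t\,\partial u, \partial\overrightarrow\alpha/\partial u \right\rangle$, using equality of mixed partials so that $\partial^2\overrightarrow\alpha/\partial t\,\partial u = \partial/\partial u\left(\partial\overrightarrow\alpha/\partial t\right)$. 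Since $\partial\overrightarrow\alpha/\partial u = v\,\overrightarrow T$ and $\partial\overrightarrow\alpha/\partial t = f_1\overrightarrow T + f_2\overrightarrow g + f_3\overrightarrow n$ by \eqref{3.3}, this reduces the problem to computing $\partial/\partial u\left(f_1\overrightarrow T + f_2\overrightarrow g + f_3\overrightarrow n\right)$ and pairing it with $v\,\overrightarrow T$.

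The key step is then to differentiate the frame expansion with respect to $u$ using the Darboux derivative formulae. Because those formulae are written with respect to arclength $s$, I would convert via \eqref{3.2}, i.e. $\partial/\partial u = v\,\partial/\partial s$, so that $\partial\overrightarrow T/\partial u = v(k_g\overrightarrow g + k_n\overrightarrow n)$, $\partial\overrightarrow g/\partial u = v(-k_g\overrightarrow T + \tau_g\overrightarrow n)$, and $\partial\overrightarrow n/\partial u = v(-k_n\overrightarrow T - \tau_g\overrightarrow g)$. Applying the product rule to $f_1\overrightarrow T + f_2\overrightarrow g + f_3\overrightarrow n$ and collecting the coefficient of $\overrightarrow T$ gives $\partial f_1/\partial u - v f_2 k_g - v f_3 k_n$; the $\overrightarrow g$ and $\overrightarrow n$ components are irrelevant here since they are killed by the inner product with $\overrightarrow T$. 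Substituting into $v\,\partial v/\partial t = \left\langle \partial/\partial u(\partial\overrightarrow\alpha/\partial t), v\overrightarrow T \right\rangle$ and cancelling one factor of $v$ yields \eqref{3.6}.

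I do not anticipate a genuine obstacle; the argument is a routine frame computation. The only point requiring care is bookkeeping: correctly invoking $\partial/\partial u = v\,\partial/\partial s$ when passing the Darboux equations from the $s$-derivative to the $u$-derivative, and making sure the product-rule terms $f_i\,\partial(\cdot)/\partial u$ are expanded with this extra factor of $v$ while the terms $(\partial f_i/\partial u)(\cdot)$ are not. Keeping the orthonormality $\left\langle \overrightarrow T, \overrightarrow T \right\rangle = 1$, $\left\langle \overrightarrow T, \overrightarrow g \right\rangle = \left\langle \overrightarrow T, \overrightarrow n \right\rangle = 0$ in mind throughout makes the projection onto $\overrightarrow T$ immediate and finishes the proof.
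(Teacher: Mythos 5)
Your proposal is correct and follows essentially the same route as the paper's own proof: differentiate $v^2=\left\langle \partial\overrightarrow\alpha/\partial u,\partial\overrightarrow\alpha/\partial u\right\rangle$ in $t$, commute the mixed partials, expand $\partial/\partial u\left(f_1\overrightarrow T+f_2\overrightarrow g+f_3\overrightarrow n\right)$ via the Darboux equations with the factor $v$ from $\partial/\partial u=v\,\partial/\partial s$, and project onto $\overrightarrow T$. No gaps; your version simply spells out the bookkeeping the paper leaves implicit.
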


\noindent \begin{proof} Since $\frac{\partial }{{\partial u}}$ and
$\frac{\partial }{{\partial t}}$ commute and ${v^2} = \left\langle {\frac{{\partial \overrightarrow \alpha  }}{{\partial u}},\frac{{\partial \overrightarrow \alpha  }}{{\partial u}}} \right\rangle ,$ we have
\begin{equation*}
\begin{array}{l}
 2v\frac{{\partial v}}{{\partial t}} = \frac{\partial }{{\partial t}}\left\langle {\frac{{\partial \overrightarrow \alpha  }}{{\partial u}},\frac{{\partial \overrightarrow \alpha  }}{{\partial u}}} \right\rangle  \\
 \,\,\,\,\,\,\,\,\,\,\,\,\,\,\, = 2\left\langle {\frac{{\partial \overrightarrow \alpha  }}{{\partial u}},\frac{\partial }{{\partial u}}\left( {{f_1}\overrightarrow T  + {f_2}\overrightarrow g  + {f_3}\overrightarrow n } \right)} \right\rangle  \\
 \,\,\,\,\,\,\,\,\,\,\,\,\,\,\, = 2v\left( {\frac{{\partial {f_1}}}{{\partial u}} - {f_2}v{k_g} - {f_3}v{k_n}} \right). \\
 \end{array}
 \end{equation*}
Thus, we reach

\begin{equation*}
\frac{{\partial v}}{{\partial t}}{{\ =}} \frac{{\partial f_1
}}{{\partial u}} - f_2 vk_g  - f_3 vk_n .
\end{equation*}
If we take in the conditions of being geodesic and asymptotic of a
curve and Lemma 3.1, we give the following
\end{proof}

\begin{corollary}
If the curve is a geodesic curve or asymptotic curve, then there
is following equations
\end{corollary}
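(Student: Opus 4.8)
The plan is to specialize the general evolution equation \eqref{3.6} from Lemma 3.1 to the two cases singled out by the surface-theoretic characterizations listed in the Preliminaries, namely $k_g = 0$ for a geodesic and $k_n = 0$ for an asymptotic line. In each case the inextensibility condition \eqref{3.5}, which forces $\partial v/\partial t = 0$ for all $u$, then collapses the right-hand side of \eqref{3.6} to a single relation among the speed functions $f_1, f_2, f_3$ and the remaining Darboux invariant.

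Concretely, I would argue as follows. First, suppose $\alpha(\cdot, t)$ is a geodesic curve, so $k_g = 0$. Substituting into \eqref{3.6} gives $\partial v/\partial t = \partial f_1/\partial u - f_3 v k_n$. Invoking Definition~(inextensible) together with \eqref{3.5}, we have $\partial v/\partial t = 0$, whence
\begin{equation*}
\frac{\partial f_1}{\partial u} = f_3 v k_n .
\end{equation*}
Second, suppose $\alpha(\cdot, t)$ is an asymptotic line, so $k_n = 0$. The same substitution into \eqref{3.6} and the same appeal to inextensibility yield
\begin{equation*}
\frac{\partial f_1}{\partial u} = f_2 v k_g .
\end{equation*}
One may also record the degenerate subcase in which the curve is simultaneously geodesic and asymptotic (so $k_g = k_n = 0$), which forces $\partial f_1/\partial u = 0$, i.e. $f_1$ is independent of the parameter $u$.

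There is essentially no obstacle here: the corollary is a direct algebraic consequence of Lemma 3.1 and the definitions. The only point requiring a word of care is making explicit that the inextensibility hypothesis of the corollary is what lets us set $\partial v/\partial t = 0$ pointwise in $u$ — this follows because \eqref{3.5} must hold for every $u \in [0,1]$, so the integrand $\partial v/\partial t$ vanishes identically. After that, each claimed equation is obtained by simply deleting the vanishing term from \eqref{3.6}. I would present the two (or three) resulting identities as the displayed equations of the corollary and note that they are the specializations of the inextensibility constraint to geodesic and asymptotic flows respectively.
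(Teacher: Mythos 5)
Your core step --- setting $k_g=0$ for a geodesic and $k_n=0$ for an asymptotic line in equation (\ref{3.6}) --- is exactly what the paper does; the displayed equations of this corollary are precisely the specializations $\frac{\partial v}{\partial t}=\frac{\partial f_1}{\partial u}-f_3vk_n$ and $\frac{\partial v}{\partial t}=\frac{\partial f_1}{\partial u}-f_2vk_g$, with no inextensibility hypothesis. Where you diverge is in additionally invoking (\ref{3.5}) to set $\frac{\partial v}{\partial t}=0$ and concluding $\frac{\partial f_1}{\partial u}=f_3vk_n$ (resp.\ $\frac{\partial f_1}{\partial u}=f_2vk_g$): that is not what this corollary asserts, but is essentially the unnormalized form of the later Corollary 3.2, which follows from Theorem \ref{T:3.1}. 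So your argument is mathematically correct and subsumes the paper's one-line justification, but it proves a statement with an added hypothesis; for the corollary as actually stated you should stop after substituting the vanishing curvature into (\ref{3.6}).
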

\begin{equation*}
\begin{array}{l}
\frac{{\partial v}}{{\partial t}} = \frac{{\partial
{f_1}}}{{\partial u}} - {f_3}v{k_n}
 \end{array}
\end{equation*}
or
\begin{equation*}
\begin{array}{l}
\frac{{\partial v}}{{\partial t}} = \frac{{\partial
{f_1}}}{{\partial u}} - {f_2}v{k_g},
\end{array}
\end{equation*}
respectively.
\begin{theorem}\label{T:3.1}
Let $\left\{ {\overrightarrow T ,\overrightarrow g ,\overrightarrow n } \right\}$ be Darboux frame of the curve
$\alpha $ on $M$ and $\frac{{\partial \overrightarrow \alpha  }}{{\partial t}} = {f_1}\overrightarrow T  + {f_2}\overrightarrow g  + {f_3}\overrightarrow n $ be a differentiable flow of $\alpha $ in
${\mathbb{R}^3}$ . Then the flow is inextensible if and only if
\begin{equation}\label{3.7}
\begin{array}{l}
\frac{{\partial {f_1}}}{{\partial s}} = {f_2}{k_g} + {f_3}{k_n}.
\end{array}
\end{equation}
\end{theorem}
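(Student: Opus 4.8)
The plan is to read off the criterion \eqref{3.7} directly from Lemma 3.1, since all the computational work has already been done there. By the Definition of an inextensible flow, $\alpha(u,t)$ is inextensible precisely when $\frac{\partial}{\partial t}\left\|\frac{\partial\overrightarrow\alpha}{\partial u}\right\| = 0$ on $[0,l]\times[0,w)$, that is, when $\frac{\partial v}{\partial t}\equiv 0$. (This is the same as the integral condition \eqref{3.5}: if $\frac{\partial v}{\partial t}\equiv 0$ then $\frac{\partial}{\partial t}S(u,t)=\int_0^u\frac{\partial v}{\partial t}\,du=0$, and conversely if $\frac{\partial}{\partial t}S(u,t)=0$ for every $u$ then differentiating in $u$ returns $\frac{\partial v}{\partial t}=0$.) So the theorem reduces entirely to rewriting the equation $\frac{\partial v}{\partial t}=0$.

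First I would substitute the expression \eqref{3.6} from Lemma 3.1, namely $\frac{\partial v}{\partial t}=\frac{\partial f_1}{\partial u}-f_2 v k_g-f_3 v k_n$. Imposing inextensibility, i.e. setting the left-hand side to zero, yields $\frac{\partial f_1}{\partial u}=f_2 v k_g+f_3 v k_n$. Since each curve $\alpha(\cdot,t)$ is regular, $v=\left\|\frac{\partial\overrightarrow\alpha}{\partial u}\right\|>0$, so I may divide by $v$; invoking the operator identity \eqref{3.2}, $\frac{\partial}{\partial s}=\frac{1}{v}\frac{\partial}{\partial u}$, the left-hand side becomes $\frac{\partial f_1}{\partial s}$ and the right-hand side becomes $f_2 k_g+f_3 k_n$, which is exactly \eqref{3.7}.

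For the converse I would simply run the same chain of equalities in reverse: multiplying \eqref{3.7} by $v$ gives $\frac{\partial f_1}{\partial u}=f_2 v k_g+f_3 v k_n$, and then \eqref{3.6} forces $\frac{\partial v}{\partial t}=0$, which is inextensibility. Because every step is an equivalence, both implications are obtained simultaneously.

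I do not expect any genuine obstacle here, as the substance is carried by Lemma 3.1. The only two points deserving a line of justification are that the pointwise vanishing of $\frac{\partial v}{\partial t}$ is equivalent to the arclength-preservation condition \eqref{3.5} (immediate by differentiating under the integral sign and then in $u$), and that $v$ never vanishes, so that passing from $\frac{\partial}{\partial u}$ to $\frac{\partial}{\partial s}$ via \eqref{3.2} is legitimate; neither presents a real difficulty.
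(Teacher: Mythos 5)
Your proposal is correct and follows essentially the same route as the paper: both reduce the theorem to Lemma 3.1's formula \eqref{3.6} for $\frac{\partial v}{\partial t}$ and then convert $\frac{\partial}{\partial u}$ to $\frac{\partial}{\partial s}$ via \eqref{3.2}. Your version is in fact slightly more careful, since you explicitly justify passing from the vanishing of the integral $\int_0^u \frac{\partial v}{\partial t}\,du$ for all $u$ to the pointwise vanishing of the integrand, a step the paper leaves implicit.
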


\noindent \begin{proof} Suppose that the curve flow is
inextensible. From equations (\ref{3.4}) and (\ref{3.6}) for $u \in \left[
{0,l} \right]$ , we see that
\begin{equation}\label{3.8}
\frac{\partial }{{\partial t}}S\left( {u,t} \right) =
\int\limits_0^u {\frac{{\partial v}}{{\partial t}}du = }
\int\limits_0^u {\left( {\frac{{\partial {f_1}}}{{\partial u}} -
{f_2}v{k_g} - {f_3}v{k_n}} \right)\,du}  = 0.
\end{equation}
Thus, it can be seen that
\begin{equation}\label{3.9}
\frac{{\partial {f_1}}}{{\partial u}} = {f_2}v{k_g} + {f_3}v{k_n}.
\end{equation}
Considering the last equation and (\ref{3.2}), we reach
\begin{equation*}
\begin{array}{l}
\frac{{\partial f_1 }}{{\partial s}} = f_2 k_g  + f_3 k_n .
 \end{array}
\end{equation*}

\noindent Conversely, following similar way as above, the proof is
completed.
\end{proof}

\noindent From Theorem 3.1, we have following corollary.

\begin{corollary}

i-  Let the curve $\alpha $ is a geodesic curve on $M.$ Then the
curve flow is inextensible if and only if  $\frac{{\partial
{f_1}}}{{\partial s}} = {f_3}{k_n}$.

ii- Let the curve $\alpha $ is a asymptotic line on $M.$ Then the
curve flow is inextensible if and only if  $\frac{{\partial
{f_1}}}{{\partial s}} = {f_2}{k_g}.$
\end{corollary}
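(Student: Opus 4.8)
The plan is to obtain both statements as immediate specializations of Theorem~\ref{T:3.1}, feeding into it the classical characterizations of geodesics and asymptotic lines recalled in the Preliminaries. Recall that Theorem~\ref{T:3.1} asserts that the flow $\frac{\partial \overrightarrow\alpha }{\partial t} = f_1\overrightarrow T + f_2\overrightarrow g + f_3\overrightarrow n$ is inextensible if and only if $\frac{\partial f_1}{\partial s} = f_2 k_g + f_3 k_n$. Thus the entire argument reduces to simplifying the right-hand side of this criterion under the respective hypothesis.

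For part (i), I would invoke relation (i) of the Preliminaries: a curve lying on $M$ is a geodesic curve if and only if $k_g = 0$. Inserting $k_g = 0$ into the identity of Theorem~\ref{T:3.1} annihilates the first term on the right, so inextensibility of the flow becomes equivalent to $\frac{\partial f_1}{\partial s} = f_3 k_n$, which is exactly claim (i). Part (ii) runs symmetrically: a curve on $M$ is an asymptotic line if and only if $k_n = 0$ (relation (ii) of the Preliminaries), and substituting $k_n = 0$ into the same criterion leaves $\frac{\partial f_1}{\partial s} = f_2 k_g$, which is claim (ii).

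I expect no genuine obstacle here; in each case the proof is a one-line substitution. The single point worth stating carefully is the reading of the hypothesis: the phrase ``$\alpha$ is a geodesic (resp. an asymptotic line) on $M$'' must be understood to hold for the whole family $\alpha(\cdot,t)$, i.e. $k_g \equiv 0$ (resp. $k_n \equiv 0$) for all $u$ and all $t$ in the flow interval, so that the substitution into Theorem~\ref{T:3.1} is legitimate at every instant of the evolution. Under this convention both equivalences follow at once, just as Theorem~\ref{T:3.1} was itself deduced from Lemma 3.1 together with the inextensibility condition (\ref{3.5}).
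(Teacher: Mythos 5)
Your proposal is correct and coincides with the paper's own (implicit) argument: the corollary is stated as an immediate consequence of Theorem~\ref{T:3.1}, obtained by substituting $k_g=0$ for a geodesic and $k_n=0$ for an asymptotic line into the criterion $\frac{\partial f_1}{\partial s}=f_2k_g+f_3k_n$. Your added caveat that the geodesic/asymptotic condition should hold for the whole family $\alpha(\cdot,t)$ is a sensible clarification, though the paper does not address it.
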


\noindent Now, we restrict ourselves to the arclength parameterized
curves. That is, $v = 1$ and the local coordinate $u$ corresponds
to the curve arclength $s$. We require the following Lemma.

\begin{lemma}
Let $M$ be an orientable surface in ${E^3}$ and $\left\{ {\overrightarrow T ,\overrightarrow g ,\overrightarrow n } \right\}$ be a Darboux frame of the curve $\alpha $ on $M.$ Then,
the differentiations of $\left\{ {\overrightarrow T ,\overrightarrow g ,\overrightarrow n } \right\}$ with respect to
$t$ is
\begin{equation*}
\begin{array}{l}
 \frac{{\partial \overrightarrow T }}{{\partial t}} = \left( {{f_1}{k_g} + \frac{{\partial {f_2}}}{{\partial s}} - {f_3}{\tau _g}} \right)\overrightarrow g  + \left( {{f_1}{k_n} + \frac{{\partial {f_3}}}{{\partial s}} + {f_2}{\tau _g}} \right)\overrightarrow n  \\
 \frac{{\partial \overrightarrow g }}{{\partial t}} =  - \left( {{f_1}{k_g} + \frac{{\partial {f_2}}}{{\partial s}} - {f_3}{\tau _g}} \right)\overrightarrow T  + \psi \overrightarrow n  \\
 \frac{{\partial \overrightarrow n }}{{\partial t}} =  - \left( {{f_1}{k_n} + \frac{{\partial {f_3}}}{{\partial s}} + {f_2}{\tau _g}} \right)\overrightarrow T  - \psi \overrightarrow g  \\
 \end{array}
\end{equation*}
where  $\psi  = \left\langle {\frac{{\partial \overrightarrow g }}{{\partial t}},\overrightarrow n } \right\rangle .$
\end{lemma}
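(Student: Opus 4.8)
The plan is to mimic the proof of Lemma 3.1: since we have now specialized to $v=1$, the parameter $u$ is the arclength $s$, and the mixed partials $\partial/\partial t$ and $\partial/\partial s$ commute. First I would write $\overrightarrow T = \partial\overrightarrow\alpha/\partial s$ and compute
\[
\frac{\partial \overrightarrow T}{\partial t}=\frac{\partial}{\partial t}\frac{\partial\overrightarrow\alpha}{\partial s}=\frac{\partial}{\partial s}\frac{\partial\overrightarrow\alpha}{\partial t}=\frac{\partial}{\partial s}\bigl(f_1\overrightarrow T+f_2\overrightarrow g+f_3\overrightarrow n\bigr),
\]
and then expand the right-hand side using the Darboux derivative formulae (with the dot now read as $\partial/\partial s$). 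Collecting the coefficients of $\overrightarrow T,\overrightarrow g,\overrightarrow n$ gives a $\overrightarrow T$-coefficient equal to $\partial f_1/\partial s-f_2k_g-f_3k_n$, a $\overrightarrow g$-coefficient $f_1k_g+\partial f_2/\partial s-f_3\tau_g$, and an $\overrightarrow n$-coefficient $f_1k_n+\partial f_3/\partial s+f_2\tau_g$. Here is where the inextensibility hypothesis enters: because $v\equiv 1$ the flow is inextensible, so Theorem \ref{T:3.1} (equation (\ref{3.7})) forces the $\overrightarrow T$-coefficient to vanish, and the stated expression for $\partial\overrightarrow T/\partial t$ follows.

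For the other two formulae I would differentiate the orthonormality relations of the Darboux frame in $t$. From $\langle\overrightarrow g,\overrightarrow g\rangle=1$ we get $\langle\partial\overrightarrow g/\partial t,\overrightarrow g\rangle=0$, and from $\langle\overrightarrow T,\overrightarrow g\rangle=0$ we get $\langle\partial\overrightarrow g/\partial t,\overrightarrow T\rangle=-\langle\overrightarrow g,\partial\overrightarrow T/\partial t\rangle$, which is precisely minus the $\overrightarrow g$-coefficient computed above; hence $\partial\overrightarrow g/\partial t=-\bigl(f_1k_g+\partial f_2/\partial s-f_3\tau_g\bigr)\overrightarrow T+\psi\overrightarrow n$ with $\psi:=\langle\partial\overrightarrow g/\partial t,\overrightarrow n\rangle$. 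Likewise $\langle\overrightarrow n,\overrightarrow n\rangle=1$ and $\langle\overrightarrow T,\overrightarrow n\rangle=0$ pin down the $\overrightarrow n$- and $\overrightarrow T$-coefficients of $\partial\overrightarrow n/\partial t$, giving the $\overrightarrow T$-component $-\bigl(f_1k_n+\partial f_3/\partial s+f_2\tau_g\bigr)$, and differentiating $\langle\overrightarrow g,\overrightarrow n\rangle=0$ yields $\langle\partial\overrightarrow n/\partial t,\overrightarrow g\rangle=-\langle\overrightarrow n,\partial\overrightarrow g/\partial t\rangle=-\psi$, which produces the last formula $\partial\overrightarrow n/\partial t=-\bigl(f_1k_n+\partial f_3/\partial s+f_2\tau_g\bigr)\overrightarrow T-\psi\overrightarrow g$.

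I do not expect a genuine obstacle: the entire content is the one commutation-of-derivatives computation together with the orthonormality bookkeeping. The only points that require care are (i) remembering that the vanishing of the $\overrightarrow T$-component of $\partial\overrightarrow T/\partial t$ is not automatic but is exactly the inextensibility identity of Theorem \ref{T:3.1}, and (ii) observing that $\psi$ is a genuinely free function — it records the instantaneous rotation of the frame about $\overrightarrow T$ — so it is simply named, not computed. Writing $\partial\overrightarrow g/\partial t$ and $\partial\overrightarrow n/\partial t$ in the frame basis and checking that all skew-symmetry constraints are satisfied then completes the proof.
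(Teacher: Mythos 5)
Your proposal is correct and follows essentially the same route as the paper: commute $\partial/\partial t$ with $\partial/\partial s$ acting on $\overrightarrow\alpha$, expand via the Darboux formulae, kill the $\overrightarrow T$-component using the inextensibility identity (\ref{3.7}), and then recover the remaining components of $\frac{\partial \overrightarrow g}{\partial t}$ and $\frac{\partial \overrightarrow n}{\partial t}$ by differentiating the orthonormality relations in $t$, with $\psi=\left\langle \frac{\partial \overrightarrow g}{\partial t},\overrightarrow n\right\rangle$ left as a free function. Your explicit remark that the vanishing of the tangential component is exactly Theorem \ref{T:3.1} (and not automatic) is a point the paper makes only implicitly, since the lemma's hypotheses do not state inextensibility outright.
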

\begin{proof}
Since $\frac{\partial }{{\partial t}}$ and $\frac{\partial
}{{\partial s}}$ are commutative, it seen that
\begin{equation*}
\begin{array}{l}
 \frac{{\partial \overrightarrow T }}{{\partial t}} = \frac{\partial }{{\partial t}}\left( {\frac{{\partial \overrightarrow \alpha  }}{{\partial s}}} \right) = \frac{\partial }{{\partial s}}\left( {\frac{{\partial \overrightarrow \alpha  }}{{\partial t}}} \right) = \frac{\partial }{{\partial s}}\left( {{f_1}\overrightarrow T  + {f_2}\overrightarrow g  + {f_3}\overrightarrow n } \right) \\
 \,\,\,\,\,\,\,\,\,\,\, = \,\frac{{\partial {f_1}}}{{\partial s}}\overrightarrow T  + {f_1}\frac{{\partial \overrightarrow T }}{{\partial s}} + \frac{{\partial {f_2}}}{{\partial s}}\overrightarrow g  + {f_2}\frac{{\partial \overrightarrow g }}{{\partial s}} + \frac{{\partial {f_3}}}{{\partial s}}\overrightarrow n  + {f_3}\frac{{\partial \overrightarrow n }}{{\partial s}}. \\
 \end{array}
\end{equation*}
Substituting the equation (\ref{3.7}) into the last equation and using
Theorem \ref{T:3.1}, we have
\begin{equation*}
\frac{{\partial \overrightarrow T }}{{\partial t}} = \left( {{f_1}{k_g} + \frac{{\partial {f_2}}}{{\partial s}} - {f_3}{\tau _g}} \right)\overrightarrow g  + \left( {{f_1}{k_n} + \frac{{\partial {f_3}}}{{\partial s}} + {f_2}{\tau _g}} \right)\overrightarrow n .
\end{equation*}
\noindent Now, let us differentiate the Darboux frame with respect
to $t$ as follows;
\begin{equation*}
\,\,\,\,\,\,\,0 = \frac{\partial }{{\partial t}}\left\langle {\overrightarrow T ,\overrightarrow g } \right\rangle  = \left\langle {\frac{{\partial \overrightarrow T }}{{\partial t}},\overrightarrow g } \right\rangle  + \left\langle {\overrightarrow T ,\frac{{\partial \overrightarrow g }}{{\partial t}}} \right\rangle
\end{equation*}
\begin{equation}\label{3.10}
\,\, = \left( {{f_1}{k_g} + \frac{{\partial {f_2}}}{{\partial s}} - {f_3}{\tau _g}} \right) + \left\langle {\overrightarrow T ,\frac{{\partial \overrightarrow g }}{{\partial t}}} \right\rangle
\end{equation}
\begin{equation*}
\,\,\,\,\,\,\,0\, = \frac{\partial }{{\partial t}}\left\langle {\overrightarrow T ,\overrightarrow n } \right\rangle  = \left\langle {\frac{{\partial \overrightarrow T }}{{\partial t}},\overrightarrow n } \right\rangle  + \left\langle {\overrightarrow T ,\frac{{\partial \overrightarrow n }}{{\partial t}}} \right\rangle
\end{equation*}
\begin{equation}\label{3.11}
\,\, = \left( {{f_1}{k_n} + \frac{{\partial {f_3}}}{{\partial s}} + {f_2}{\tau _g}} \right) + \left\langle {\overrightarrow T ,\frac{{\partial \overrightarrow n }}{{\partial t}}} \right\rangle
\end{equation}
From (\ref{3.10}) and (\ref{3.11}), we have obtain
\begin{equation*}
\frac{{\partial \overrightarrow g }}{{\partial t}}\,\, =  - \left( {{f_1}{k_g} + \frac{{\partial {f_2}}}{{\partial s}} - {f_3}{\tau _g}} \right)\overrightarrow T  + \psi \overrightarrow n
\end{equation*}
and
\begin{equation*}
\frac{{\partial \overrightarrow n }}{{\partial t}}\,\, =  - \left( {{f_1}{k_n} + \frac{{\partial {f_3}}}{{\partial s}} + {f_2}{\tau _g}} \right)\overrightarrow T  - \psi \overrightarrow g
\end{equation*}
respectively, where $\psi  = \left\langle {\frac{{\partial \overrightarrow g }}{{\partial t}},\overrightarrow n } \right\rangle .$
\end{proof}

\noindent If we take into consideration last Lemma, we have
following corollary.
\begin{corollary}
Let $M$ be an orientable surface in ${E^3}$.

i-  If the curve $\alpha $ is a geodesic curve, then
\begin{equation*}
\begin{array}{l}
 \frac{{\partial \overrightarrow T }}{{\partial t}} = \left( {\frac{{\partial {f_2}}}{{\partial s}} - {f_3}{\tau _g}} \right)\overrightarrow g  + \left( {{f_1}{k_n} + \frac{{\partial {f_3}}}{{\partial s}} + {f_2}{\tau _g}} \right)\overrightarrow n , \\
 \frac{{\partial \overrightarrow g }}{{\partial t}} =  - \left( {\frac{{\partial {f_2}}}{{\partial s}} - {f_3}{\tau _g}} \right)\overrightarrow T  + \psi \overrightarrow n , \\
 \frac{{\partial \overrightarrow n }}{{\partial t}} =  - \left( {{f_1}{k_n} + \frac{{\partial {f_3}}}{{\partial s}} + {f_2}{\tau _g}} \right)\overrightarrow T  - \psi \overrightarrow g , \\
 \end{array}
\end{equation*}
where $\psi  = \left\langle {\frac{{\partial \overrightarrow g }}{{\partial t}},\overrightarrow n } \right\rangle .$

ii- If the curve $\alpha $ is a asymptotic line, then
\begin{equation*}
\begin{array}{l}
 \frac{{\partial \overrightarrow T }}{{\partial t}} = \left( {{f_1}{k_g} + \frac{{\partial {f_2}}}{{\partial s}} - {f_3}{\tau _g}} \right)\overrightarrow g  + \left( {\frac{{\partial {f_3}}}{{\partial s}} + {f_2}{\tau _g}} \right)\overrightarrow n , \\
 \frac{{\partial \overrightarrow g }}{{\partial t}} =  - \left( {{f_1}{k_g} + \frac{{\partial {f_2}}}{{\partial s}} - {f_3}{\tau _g}} \right)\overrightarrow T  + \psi \overrightarrow n , \\
 \frac{{\partial \overrightarrow n }}{{\partial t}} =  - \left( {\frac{{\partial {f_3}}}{{\partial s}} + {f_2}{\tau _g}} \right)\overrightarrow T  - \psi \overrightarrow g , \\
 \end{array}
\end{equation*}
where $\psi  = \left\langle {\frac{{\partial \overrightarrow g }}{{\partial t}},\overrightarrow n } \right\rangle .$

iii-    If the curve   is a curvature line, then
\begin{equation*}
\begin{array}{l}
 \frac{{\partial \overrightarrow T }}{{\partial t}} = \left( {{f_1}{k_g} + \frac{{\partial {f_2}}}{{\partial s}}} \right)\overrightarrow g  + \left( {{f_1}{k_n} + \frac{{\partial {f_3}}}{{\partial s}}} \right)\overrightarrow n , \\
 \frac{{\partial \overrightarrow g }}{{\partial t}} =  - \left( {{f_1}{k_g} + \frac{{\partial {f_2}}}{{\partial s}}} \right)\overrightarrow T  + \psi \overrightarrow n , \\
 \frac{{\partial \overrightarrow n }}{{\partial t}} =  - \left( {{f_1}{k_n} + \frac{{\partial {f_3}}}{{\partial s}}} \right)\overrightarrow T  - \psi \overrightarrow g , \\
 \end{array}
\end{equation*}
where $\psi  = \left\langle {\frac{{\partial \overrightarrow g }}{{\partial t}},\overrightarrow n } \right\rangle .$
\end{corollary}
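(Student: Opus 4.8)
The plan is to obtain all three cases as immediate specializations of the preceding Lemma, which already records $\frac{\partial \overrightarrow{T}}{\partial t}$, $\frac{\partial \overrightarrow{g}}{\partial t}$ and $\frac{\partial \overrightarrow{n}}{\partial t}$ in terms of $f_1,f_2,f_3$ and $k_g,k_n,\tau_g$. No genuinely new computation is needed: one only substitutes, in turn, the scalar characterizations of the three classes of curves collected in the Preliminaries (cited to \cite{BO}), namely that $\alpha$ is a geodesic iff $k_g=0$, an asymptotic line iff $k_n=0$, and a curvature (principal) line iff $\tau_g=0$.

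For (i) I would set $k_g=0$ in the Lemma's formulas: the coefficient of $\overrightarrow{g}$ in $\frac{\partial \overrightarrow{T}}{\partial t}$, namely $f_1 k_g+\frac{\partial f_2}{\partial s}-f_3\tau_g$, reduces to $\frac{\partial f_2}{\partial s}-f_3\tau_g$; the coefficient of $\overrightarrow{n}$ is unaffected; and $\psi=\langle \frac{\partial \overrightarrow{g}}{\partial t},\overrightarrow{n}\rangle$ is a defined quantity not involving $k_g$, hence unchanged. This yields exactly the displayed system for $\frac{\partial \overrightarrow{T}}{\partial t}$, $\frac{\partial \overrightarrow{g}}{\partial t}$, $\frac{\partial \overrightarrow{n}}{\partial t}$. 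Case (ii) is the same argument with the roles reversed: putting $k_n=0$ turns the coefficient of $\overrightarrow{n}$, $f_1 k_n+\frac{\partial f_3}{\partial s}+f_2\tau_g$, into $\frac{\partial f_3}{\partial s}+f_2\tau_g$, while the coefficient of $\overrightarrow{g}$ survives verbatim. For (iii), putting $\tau_g=0$ removes the $\tau_g$-terms from both coefficients simultaneously, leaving $f_1 k_g+\frac{\partial f_2}{\partial s}$ and $f_1 k_n+\frac{\partial f_3}{\partial s}$, which is the stated claim.

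Since each part is a one-line substitution, there is no real obstacle; the only points to keep in mind are that the three characterizations invoked are precisely those already recorded in the Preliminaries, and that $\psi$ is merely shorthand for $\langle \frac{\partial \overrightarrow{g}}{\partial t},\overrightarrow{n}\rangle$, so it is never disturbed by the vanishing of $k_g$, $k_n$ or $\tau_g$. (If a self-contained derivation were preferred, one could instead re-run the proof of the Lemma with the relevant entry of the Darboux matrix set to zero from the outset, which produces the same three systems.)
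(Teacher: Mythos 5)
Your proof is correct and matches the paper exactly: the paper offers no separate argument for this corollary, merely noting that it follows from the preceding Lemma, and your substitutions $k_g=0$, $k_n=0$, $\tau_g=0$ into the Lemma's formulas are precisely the intended (and sufficient) justification. Your checks on the individual coefficients and the observation that $\psi$ is unaffected are accurate.
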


\begin{theorem}
Suppose that the curve flow $\frac{{\partial \overrightarrow \alpha  }}{{\partial t}} = {f_1}\overrightarrow T  + {f_2}\overrightarrow g  + {f_3}\overrightarrow n $ is inextensible on the orientable
surface on $M$. In this case, the following partial differential
equation are held:

$\frac{{\partial {k_g}}}{{\partial t}} = \frac{{\partial
{f_1}}}{{\partial s}}{k_g} + {f_1}\frac{{\partial
{k_g}}}{{\partial s}} + \frac{{{\partial ^2}{f_2}}}{{\partial
{s^2}}} - \frac{{\partial {f_3}}}{{\partial s}}{\tau _g} -
{f_3}\frac{{\partial {\tau _g}}}{{\partial s}} - {f_1}{k_n}{\tau
_g} - \frac{{\partial {f_3}}}{{\partial s}}{\tau _g} - {f_2}\tau
_g^2$ ,

$\frac{{\partial {k_n}}}{{\partial t}} = \frac{{\partial
{f_1}}}{{\partial s}}{k_n} + {f_1}\frac{{\partial
{k_n}}}{{\partial s}} + \frac{{{\partial ^2}{f_3}}}{{\partial
{s^2}}} + \frac{{\partial {f_2}}}{{\partial s}}{\tau _g} +
{f_2}\frac{{\partial {\tau _g}}}{{\partial s}} + {f_1}{k_g}{\tau
_g} + \frac{{\partial {f_2}}}{{\partial s}}{\tau _g} - {f_3}\tau
_g^2$ ,

$\frac{{\partial {\tau _g}}}{{\partial t}} =  - {f_1}{k_g}{k_n} -
\frac{{\partial {f_2}}}{{\partial s}}{k_n} + {f_3}{k_n}{\tau _g} +
\frac{{\partial \psi }}{{\partial s}}$,

$\psi {k_n} = \left( { - {f_1}{k_n} - \frac{{\partial
{f_3}}}{{\partial s}} - {f_2}{\tau _g}} \right){\tau _g}$,

$\psi {k_g} = \left( { - {f_1}{k_g} - \frac{{\partial
{f_3}}}{{\partial s}} + {f_3}{\tau _g}} \right){\tau _g}$ .

\end{theorem}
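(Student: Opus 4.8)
The plan is to use the compatibility relation $\partial_t\partial_s=\partial_s\partial_t$, which is available here because we have passed to the arclength parametrization ($v\equiv 1$, as in the hypotheses preceding the last Lemma), and to apply it to each Darboux vector in turn. Write $A:=f_1k_g+\tfrac{\partial f_2}{\partial s}-f_3\tau_g$ and $B:=f_1k_n+\tfrac{\partial f_3}{\partial s}+f_2\tau_g$, so that the last Lemma reads $\dfrac{\partial\vec T}{\partial t}=A\vec g+B\vec n$, $\dfrac{\partial\vec g}{\partial t}=-A\vec T+\psi\vec n$, $\dfrac{\partial\vec n}{\partial t}=-B\vec T-\psi\vec g$, with $\psi=\bigl\langle\tfrac{\partial\vec g}{\partial t},\vec n\bigr\rangle$ carried along as an auxiliary scalar (it has no closed form in $f_1,f_2,f_3$).

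First I would compute $\partial_t\bigl(\partial_s\vec T\bigr)$ in two ways. Starting from $\partial_s\vec T=k_g\vec g+k_n\vec n$ and inserting the Darboux equations together with the $t$-derivatives of $\vec g,\vec n$ above gives
\[
\frac{\partial}{\partial t}\frac{\partial\vec T}{\partial s}=-(k_gA+k_nB)\,\vec T+\Bigl(\frac{\partial k_g}{\partial t}-k_n\psi\Bigr)\vec g+\Bigl(\frac{\partial k_n}{\partial t}+k_g\psi\Bigr)\vec n .
\]
Starting instead from $\partial_t\vec T=A\vec g+B\vec n$ and differentiating in $s$ gives
\[
\frac{\partial}{\partial s}\frac{\partial\vec T}{\partial t}=-(Ak_g+Bk_n)\,\vec T+\Bigl(\frac{\partial A}{\partial s}-B\tau_g\Bigr)\vec g+\Bigl(\frac{\partial B}{\partial s}+A\tau_g\Bigr)\vec n .
\]
Equating the $\vec g$- and $\vec n$-components, and then expanding $\partial_sA$ and $\partial_sB$ by the product rule, produces the first two displayed partial differential equations for $k_g$ and $k_n$; the $\vec T$-components coincide automatically, which merely records $\|\vec T\|=1$.

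Then I would run the identical argument on $\vec g$ (and on $\vec n$): compare $\partial_t\bigl(\partial_s\vec g\bigr)$, computed from $\partial_s\vec g=-k_g\vec T+\tau_g\vec n$, with $\partial_s\bigl(\partial_t\vec g\bigr)$, computed from $\partial_t\vec g=-A\vec T+\psi\vec n$. The $\vec n$-component of the resulting identity expresses $\partial_t\tau_g$ through $\tfrac{\partial\psi}{\partial s}$ and lower-order terms, which is the third displayed equation; the $\vec T$- and $\vec g$-components of this identity and of the analogous $\vec n$-identity then collapse, after simplification using the expressions for $A$ and $B$, to the last two displayed algebraic relations among $\psi,\tau_g,k_g,k_n$.

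Everything in the argument is a sequence of routine differentiations and collections of like terms, so there is no conceptual obstacle. The two points that will actually need care are keeping $\psi=\langle\partial_t\vec g,\vec n\rangle$ as an independent unknown throughout, and the bookkeeping in $\partial_sA$ and $\partial_sB$, where each of $f_1,f_2,f_3,k_g,k_n,\tau_g$ contributes a product-rule term that must land on the correct side; that bookkeeping is essentially the only obstacle.
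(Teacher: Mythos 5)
Your overall strategy---commuting $\partial_s$ and $\partial_t$ on each Darboux vector and comparing components---is exactly the paper's, and your two displayed expansions of the mixed partial of $\vec T$ are correct. The difficulty is that your own (correct) formulas do not deliver the equations as stated. Equating the $\vec g$-components gives $\frac{\partial k_g}{\partial t}-k_n\psi=\frac{\partial A}{\partial s}-B\tau_g$, i.e.\ $\frac{\partial k_g}{\partial t}=\frac{\partial A}{\partial s}-B\tau_g+k_n\psi$, whereas the theorem's first equation is $\frac{\partial k_g}{\partial t}=\frac{\partial A}{\partial s}-B\tau_g$ with no $\psi$-term; similarly for $k_n$. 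You cannot ``produce the first two displayed partial differential equations'' from that comparison without discarding the $k_n\psi$ and $k_g\psi$ terms, and nothing in your argument justifies dropping them (they are not generally zero). The same issue affects the third equation: the $\vec n$-component of the $\vec g$-identity gives $\partial_t\tau_g=\partial_s\psi+k_gB-k_nA$, which carries a $k_gB$ term absent from the stated formula.

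The more serious gap is the claim that the $\vec T$- and $\vec g$-components of the $\vec g$- and $\vec n$-compatibility identities ``collapse to the last two displayed algebraic relations.'' If you carry out all the expansions consistently, those components are automatically satisfied: for instance the $\vec T$-component of the $\vec g$-identity reads $-\partial_sA-\psi k_n=-\partial_tk_g-\tau_gB$, which is precisely the $k_g$-evolution equation you already derived from $\vec T$, not a new constraint. A generic flow of a curve on a surface imposes no pointwise algebraic relation among $\psi,\tau_g,k_g,k_n$, so no correct bookkeeping can yield $\psi k_n=(-f_1k_n-\partial_sf_3-f_2\tau_g)\tau_g$. The paper itself reaches those relations only by subtracting two differently truncated expressions for the same quantity: its (3.12) omits the $k_n\psi$ contribution of $k_n\partial_t\vec n$ to the $\vec g$-component, while its (3.14) omits the $-\tau_gB$ contribution of $\tau_g\partial_t\vec n$ to the $\vec T$-component, and equating the two leftovers manufactures the ``identity'' $\psi k_n=-B\tau_g$ (likewise for $\psi k_g$, where the theorem's statement moreover writes $\partial f_3/\partial s$ where the proof produces $\partial f_2/\partial s$). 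Your plan, executed correctly, proves the consistent system $\partial_tk_g=\partial_sA-B\tau_g+k_n\psi$, $\partial_tk_n=\partial_sB+A\tau_g-k_g\psi$, $\partial_t\tau_g=\partial_s\psi+k_gB-k_nA$, but it cannot prove the theorem as literally stated.
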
\label{T:3.2}
\begin{proof}
Since $\frac{\partial }{{\partial s}}\frac{{\partial \overrightarrow T }}{{\partial t}} = \frac{\partial }{{\partial t}}\frac{{\partial \overrightarrow T }}{{\partial s}}$ we get
\begin{equation*}
\begin{array}{l}
 \frac{\partial }{{\partial s}}\frac{{\partial \overrightarrow T }}{{\partial t}} = \frac{\partial }{{\partial s}}\left[ {\left( {{f_1}{k_g} + \frac{{\partial {f_2}}}{{\partial s}} - {f_3}{\tau _g}} \right)\overrightarrow g  + \left( {{f_1}{k_n} + \frac{{\partial {f_3}}}{{\partial s}} + {f_2}{\tau _g}} \right)\overrightarrow n } \right] \\
 \,\,\,\,\,\,\,\,\,\,\,\,\,\,\,\,\, = \,\left( {\frac{{\partial {f_1}}}{{\partial s}}{k_g} + {f_1}\frac{{\partial {k_g}}}{{\partial s}} + \frac{{{\partial ^2}{f_2}}}{{\partial {s^2}}} - \frac{{\partial {f_3}}}{{\partial s}}{\tau _g} - {f_3}\frac{{\partial {\tau _g}}}{{\partial s}}} \right)\overrightarrow g  + \left( {{f_1}{k_g} + \frac{{\partial {f_2}}}{{\partial s}} - {f_3}{\tau _g}} \right)\frac{{\partial \overrightarrow g }}{{\partial s}} \\
 \,\,\,\,\,\,\,\,\,\,\,\,\,\,\,\,\,\,\,\,\,\,\, + \left( {\frac{{\partial {f_1}}}{{\partial s}}{k_n} + {f_1}\frac{{\partial {k_n}}}{{\partial s}} + \frac{{{\partial ^2}{f_3}}}{{\partial {s^2}}} + \frac{{\partial {f_2}}}{{\partial s}}{\tau _g} + {f_2}\frac{{\partial {\tau _g}}}{{\partial s}}} \right)\overrightarrow n  + \left( {{f_1}{k_n} + \frac{{\partial {f_3}}}{{\partial s}} + {f_2}{\tau _g}} \right)\frac{{\partial \overrightarrow n }}{{\partial s}} \\
 \end{array}
\end{equation*}
i.e.,
\begin{equation*}
\begin{array}{l}
 \frac{\partial }{{\partial s}}\frac{{\partial \overrightarrow T }}{{\partial t}} = \,\left( {\frac{{\partial {f_1}}}{{\partial s}}{k_g} + {f_1}\frac{{\partial {k_g}}}{{\partial s}} + \frac{{{\partial ^2}{f_2}}}{{\partial {s^2}}} - \frac{{\partial {f_3}}}{{\partial s}}{\tau _g} - {f_3}\frac{{\partial {\tau _g}}}{{\partial s}}} \right)\overrightarrow g  + \left( {{f_1}{k_g} + \frac{{\partial {f_2}}}{{\partial s}} - {f_3}{\tau _g}} \right)\left( { - {k_g}\overrightarrow T  + {\tau _g}\overrightarrow n } \right) \\
 \,\,\,\,\,\,\,\,\,\,\,\,\,\,\,\,\,\,\,\,\,\, + \left( {\frac{{\partial {f_1}}}{{\partial s}}{k_n} + {f_1}\frac{{\partial {k_n}}}{{\partial s}} + \frac{{{\partial ^2}{f_3}}}{{\partial {s^2}}} + \frac{{\partial {f_2}}}{{\partial s}}{\tau _g} + {f_2}\frac{{\partial {\tau _g}}}{{\partial s}}} \right)\overrightarrow n  + \left( {{f_1}{k_n} + \frac{{\partial {f_3}}}{{\partial s}} + {f_2}{\tau _g}} \right)\left( { - {k_g}\overrightarrow T  - {\tau _g}\overrightarrow g } \right) \\
 \end{array}
\end{equation*}
while
\begin{equation*}
\frac{\partial }{{\partial t}}\frac{{\partial \overrightarrow T }}{{\partial s}} = \frac{\partial }{{\partial t}}\left( {{k_g}\overrightarrow g  + {k_n}\overrightarrow n } \right) = \frac{{\partial {k_g}}}{{\partial t}}\overrightarrow g  + {k_g}\frac{{\partial \overrightarrow g }}{{\partial t}} + \frac{{\partial {k_n}}}{{\partial t}}\overrightarrow n  + {k_n}\frac{{\partial \overrightarrow n }}{{\partial t}}.
\end{equation*}
Thus, from the both of above two equations, we reach
\begin{equation}\label{3.12}
\frac{{\partial {k_g}}}{{\partial t}} = \frac{{\partial
{f_1}}}{{\partial s}}{k_g} + {f_1}\frac{{\partial
{k_g}}}{{\partial s}} + \frac{{{\partial ^2}{f_2}}}{{\partial
{s^2}}} - \frac{{\partial {f_3}}}{{\partial s}}{\tau _g} -
{f_3}\frac{{\partial {\tau _g}}}{{\partial s}} - {f_1}{k_n}{\tau
_g} - \frac{{\partial {f_3}}}{{\partial s}}{\tau _g} - {f_2}{\tau
_g}^2
\end{equation}
and
\begin{equation}\label{3.13}
\frac{{\partial {k_n}}}{{\partial t}} = \frac{{\partial
{f_1}}}{{\partial s}}{k_n} + {f_1}\frac{{\partial
{k_n}}}{{\partial s}} + \frac{{{\partial ^2}{f_3}}}{{\partial
{s^2}}} + \frac{{\partial {f_2}}}{{\partial s}}{\tau _g} +
{f_2}\frac{{\partial {\tau _g}}}{{\partial s}} + {f_1}{k_g}{\tau
_g} + \frac{{\partial {f_2}}}{{\partial s}}{\tau _g} - {f_3}{\tau
_g}^2.
\end{equation}
Noting that $\frac{\partial }{{\partial s}}\frac{{\partial \overrightarrow g }}{{\partial t}} = \frac{\partial }{{\partial t}}\frac{{\partial \overrightarrow g }}{{\partial s}}$, it is seen that

$\begin{array}{l}
 \frac{\partial }{{\partial s}}\frac{{\partial \overrightarrow g }}{{\partial t}} = \frac{\partial }{{\partial s}}\left[ { - \left( {{f_1}{k_g} + \frac{{\partial {f_2}}}{{\partial s}} - {f_3}{\tau _g}} \right)\overrightarrow T  + \psi \overrightarrow n } \right] \\
 \,\,\,\,\,\,\,\,\,\,\,\,\,\,\,\,\, = \, - \left( {\frac{{\partial {f_1}}}{{\partial s}}{k_g} + {f_1}\frac{{\partial {k_g}}}{{\partial s}} + \frac{{{\partial ^2}{f_2}}}{{\partial {s^2}}} - \frac{{\partial {f_3}}}{{\partial s}}{\tau _g} - {f_3}\frac{{\partial {\tau _g}}}{{\partial s}}} \right)\overrightarrow T  - \left( {{f_1}{k_g} + \frac{{\partial {f_2}}}{{\partial s}} - {f_3}{\tau _g}} \right)\left( {{k_g}\overrightarrow g  + {k_n}\overrightarrow n } \right) \\
 \,\,\,\,\,\,\,\,\,\,\,\,\,\,\,\,\,\,\,\,\,\,\, + \frac{{\partial \psi }}{{\partial s}}n + \psi \left( { - {k_n}\overrightarrow T  - {\tau _g}\overrightarrow g } \right) \\
 \end{array}$
while
\begin{equation*}
\frac{\partial }{{\partial t}}\frac{{\partial \overrightarrow g }}{{\partial s}} = \frac{\partial }{{\partial t}}\left( { - {k_g}\overrightarrow T  + {\tau _g}\overrightarrow n } \right) =  - \frac{{\partial {k_g}}}{{\partial t}}\overrightarrow T  - {k_g}\frac{{\partial \overrightarrow T }}{{\partial t}} + \frac{{\partial {\tau _g}}}{{\partial t}}\overrightarrow n  + {\tau _g}\frac{{\partial \overrightarrow n }}{{\partial t}}.
\end{equation*}
Thus, we obtain
 \begin{equation}\label{3.14}
\frac{{\partial {k_g}}}{{\partial t}} = \frac{{\partial
{f_1}}}{{\partial s}}{k_g} + {f_1}\frac{{\partial
{k_g}}}{{\partial s}} + \frac{{{\partial ^2}{f_2}}}{{\partial
{s^2}}} - \frac{{\partial {f_3}}}{{\partial s}}{\tau _g} -
{f_3}\frac{{\partial {\tau _g}}}{{\partial s}} + \psi {k_n}
\end{equation}
and
\begin{equation}\label{3.15}
\frac{{\partial {\tau _g}}}{{\partial t}} =  - {f_1}{k_g}{k_n} -
\frac{{\partial {f_2}}}{{\partial s}}{k_n} + {f_3}{k_n}{\tau _g} +
\frac{{\partial \psi }}{{\partial s}}.
\end{equation}
From the equations (\ref{3.12}) and (\ref{3.14}), it is seen that
\begin{equation*}
\begin{array}{l}
 \psi {k_n} =  - {f_1}{k_n}{\tau _g} - \frac{{\partial {f_3}}}{{\partial s}}{\tau _g} - {f_2}{\tau _g}^2 \\
 \,\,\,\,\,\,\,\,\,\,\,\, = \,\left( { - {f_1}{k_n} - \frac{{\partial {f_3}}}{{\partial s}} - {f_2}{\tau _g}} \right){\tau _g}. \\
 \end{array}
\end{equation*}
By same way as above and considering $\frac{\partial }{{\partial s}}\frac{{\partial \overrightarrow n }}{{\partial t}} = \frac{\partial }{{\partial t}}\frac{{\partial \overrightarrow n }}{{\partial s}}$, we reach
\begin{equation}\label{3.16}
\frac{{\partial {k_n}}}{{\partial t}} = \frac{{\partial
{f_1}}}{{\partial s}}{k_n} + {f_1}\frac{{\partial
{k_n}}}{{\partial s}} + \frac{{{\partial ^2}{f_3}}}{{\partial
{s^2}}} + \frac{{\partial {f_2}}}{{\partial s}}{\tau _g} +
{f_2}\frac{{\partial {\tau _g}}}{{\partial s}} - \psi {k_g}.
\end{equation}
Hence, from the equations (\ref{3.13}) and (\ref{3.16}), we get
\begin{equation*}
\psi {k_g} = \,\left( { - {f_1}{k_g} - \frac{{\partial
{f_2}}}{{\partial s}} + {f_3}{\tau _g}} \right){\tau _g}.
\end{equation*}
\end{proof}

\noindent Thus, we give the following corollary from last theorem.

\begin{corollary}
Let $M$ be an orientable surface in $E^3 .$

i-  If the curve $\alpha $ is a geodesic curve on $M,$ then we
have
\begin{equation*}
\frac{{\partial {k_n}}}{{\partial t}} = \frac{{\partial
{f_1}}}{{\partial s}}{k_n} + {f_1}\frac{{\partial
{k_n}}}{{\partial s}} + \frac{{{\partial ^2}{f_3}}}{{\partial
{s^2}}} + \frac{{\partial {f_2}}}{{\partial s}}{\tau _g} +
{f_2}\frac{{\partial {\tau _g}}}{{\partial s}} + \frac{{\partial
{f_2}}}{{\partial s}}{\tau _g} - {f_3}{\tau _g}^2
\end{equation*}
\begin{equation*}
\frac{{\partial {\tau _g}}}{{\partial t}} =  - \frac{{\partial
{f_2}}}{{\partial s}}{k_n} + {f_3}{k_n}{\tau _g} + \frac{{\partial
\psi }}{{\partial s}}
\end{equation*}
and
\begin{equation*}
\psi {k_n} = \left( { - {f_1}{k_n} - \frac{{\partial
{f_3}}}{{\partial s}} - {f_2}{\tau _g}} \right){\tau _g}.
\end{equation*}
ii- If the curve $\alpha $ is a asymptotic line, we have

\begin{equation*}
\frac{{\partial {k_g}}}{{\partial t}} = \frac{{\partial
{f_1}}}{{\partial s}}{k_g} + {f_1}\frac{{\partial
{k_g}}}{{\partial s}} + \frac{{{\partial ^2}{f_2}}}{{\partial
{s^2}}} - \frac{{\partial {f_3}}}{{\partial s}}{\tau _g} -
{f_3}\frac{{\partial {\tau _g}}}{{\partial s}} - \frac{{\partial
{f_3}}}{{\partial s}}{\tau _g} - {f_2}{\tau _g}^2
\end{equation*}
\begin{equation*}
\frac{{\partial {\tau _g}}}{{\partial t}} = \frac{{\partial \psi
}}{{\partial s}}
\end{equation*}
and
\begin{equation*}
\psi {k_g} = \left( { - {f_1}{k_g} - \frac{{\partial
{f_2}}}{{\partial s}} + {f_3}{\tau _g}} \right){\tau _g}.
\end{equation*}
iii-    If the curve $\alpha $ is a curvature line, then we have
\begin{equation*}
\begin{array}{l}
 \frac{{\partial {k_g}}}{{\partial t}} = \frac{{\partial {f_1}}}{{\partial s}}{k_g} + {f_1}\frac{{\partial {k_g}}}{{\partial s}} + \frac{{{\partial ^2}{f_2}}}{{\partial {s^2}}} \\
 \frac{{\partial {k_n}}}{{\partial t}} = \frac{{\partial {f_1}}}{{\partial s}}{k_n} + {f_1}\frac{{\partial {k_n}}}{{\partial s}} + \frac{{{\partial ^2}{f_3}}}{{\partial {s^2}}}. \\
 \end{array}
\end{equation*}
\end{corollary}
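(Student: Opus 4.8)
The plan is to obtain all three parts as immediate specializations of the general evolution system of the preceding theorem, because each hypothesis is simply one of the characterizations recorded in the Preliminaries: $\alpha$ is a geodesic precisely when $k_g=0$, an asymptotic line precisely when $k_n=0$, and a principal (curvature) line precisely when $\tau_g=0$. Thus the whole argument amounts to inserting the relevant vanishing condition into equations \eqref{3.12}, \eqref{3.13}, \eqref{3.15} and the two $\psi$-relations of the theorem, deleting every term that carries the vanishing quantity as an explicit factor, and reading off the survivors; the displayed formulas of the corollary are precisely what remains. In each case the evolution equation for the curvature that is set to zero degenerates into a side constraint and is not recorded, so one keeps the simplified evolution equations for the geometric quantities that remain active together with whatever $\psi$-relation continues to carry information.

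For part (i) I would put $k_g=0$. In the $k_n$-equation \eqref{3.13} only the single summand $f_1 k_g \tau_g$ is killed; in the $\tau_g$-equation \eqref{3.15} only $-f_1 k_g k_n$ is killed, leaving $\frac{\partial\tau_g}{\partial t}=-\frac{\partial f_2}{\partial s}k_n+f_3 k_n\tau_g+\frac{\partial\psi}{\partial s}$; and the relation $\psi k_n=\left(-f_1 k_n-\frac{\partial f_3}{\partial s}-f_2\tau_g\right)\tau_g$ is free of $k_g$ and survives verbatim. This yields exactly the three displayed formulas. For part (ii) I would put $k_n=0$: the term $-f_1 k_n\tau_g$ disappears from the $k_g$-equation \eqref{3.12}, while in \eqref{3.15} every summand except $\frac{\partial\psi}{\partial s}$ carries a factor $k_n$, so the torsion equation collapses to $\frac{\partial\tau_g}{\partial t}=\frac{\partial\psi}{\partial s}$; the surviving constraint is the $k_n$-free relation $\psi k_g=\left(-f_1 k_g-\frac{\partial f_2}{\partial s}+f_3\tau_g\right)\tau_g$.

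For part (iii) I would put $\tau_g=0$, and here one must be slightly careful with the bookkeeping: since a principal line satisfies $\tau_g\equiv 0$ in both $s$ and $t$, the derivative $\frac{\partial\tau_g}{\partial s}$ also vanishes, so in \eqref{3.12} and \eqref{3.13} one deletes not only the undifferentiated factors of $\tau_g$ but also the terms $f_3\frac{\partial\tau_g}{\partial s}$ and $f_2\frac{\partial\tau_g}{\partial s}$. By contrast the terms $f_1\frac{\partial k_g}{\partial s}$ and $f_1\frac{\partial k_n}{\partial s}$ must be retained, since $k_g$ and $k_n$ are not assumed constant. What remains are the two heat-type equations $\frac{\partial k_g}{\partial t}=\frac{\partial f_1}{\partial s}k_g+f_1\frac{\partial k_g}{\partial s}+\frac{\partial^2 f_2}{\partial s^2}$ and $\frac{\partial k_n}{\partial t}=\frac{\partial f_1}{\partial s}k_n+f_1\frac{\partial k_n}{\partial s}+\frac{\partial^2 f_3}{\partial s^2}$.

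There is no genuine analytic difficulty in any of this; the statement follows by inspection once the theorem is in hand. The only place demanding attention is precisely the distinction just made in part (iii) between a curvature assumed to vanish identically, whose spatial derivative then also vanishes, and one that is merely left free. Keeping that distinction straight is the sole step where an otherwise mechanical substitution could go wrong.
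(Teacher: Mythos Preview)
Your proposal is correct and follows exactly the approach intended by the paper, which simply states that the corollary follows from the preceding theorem; you have merely spelled out the substitutions $k_g=0$, $k_n=0$, $\tau_g=0$ (with $\partial\tau_g/\partial s=0$ in the last case) that the paper leaves implicit. Your extra care in part (iii) about distinguishing identically vanishing quantities from free ones is appropriate and matches what the stated formulas require.
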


\end {document}